\theoremstyle{plain} \newtheorem{lem}{Lemma}[section]
\theoremstyle{plain} \newtheorem{prop}[lem]{Proposition}
\theoremstyle{plain} 
\theoremstyle{plain} 
\theoremstyle{definition}
\newtheorem{rem}[lem]{Remark}
\newtheorem{exa}[lem]{Example}
\numberwithin{equation}{section}
\newcommand{\C}{\mathbb{C}}
\newcommand{\Q}{\mathbb{Q}}
\newcommand{\Z}{\mathbb{Z}}
\newcommand{\R}{\mathbb{R}}
\newcommand{\A}{\mathcal{A}}
\newcommand{\GL}{\mathrm{\mathop{GL}}}
\newcommand{\End}{\mathrm{\mathop{End}}}
\newcommand{\ad}{\mathrm{\mathop{ad}}}
\newcommand{\Aut}{\mathrm{\mathop{Aut}}}
\newcommand{\gl}{\mathfrak{\mathop{gl}}}
\newcommand{\ssl}{\mathfrak{\mathop{sl}}}
\newcommand{\su}{\mathfrak{\mathop{su}}}
\newcommand{\so}{\mathfrak{\mathop{so}}}
\newcommand{\ssp}{\mathfrak{\mathop{sp}}}
\newcommand{\g}{\mathfrak{g}}
\newcommand{\hh}{\mathfrak{h}}
\newcommand{\uu}{\mathfrak{u}}
\newcommand{\kk}{\mathfrak{k}}
\newcommand{\pp}{\mathfrak{p}}
\newcommand{\veps}{\varepsilon}
\begin{document}

\title{Constructing semisimple subalgebras of real semisimple Lie algebras}

\author{Paolo Faccin}
\address{Dipartimento di Matematica\\
Universit\`{a} di Trento\\
Italy}
\email{faccin@science.unitn.it}

\author{Willem A. de Graaf}
\address{Dipartimento di Matematica\\
Universit\`{a} di Trento\\
Italy}
\email{degraaf@science.unitn.it}

\begin{abstract}
We consider the problem of constructing semisimple subalgebras of real 
(semi-) simple Lie algebras. We develop computational methods that help to deal with 
this problem. Our methods boil down to solving a set of polynomial equations.
In many cases the equations turn out to be sufficiently ``pleasant'' to
be able to solve them. In particular this is the case for $S$-subalgebras.
\end{abstract}

\maketitle

\section{Introduction}\label{sec:1}

The subject of this paper is the problem of finding semisimple subalgebras of real semisimple
Lie algebras. 
The analogous problem for complex Lie algebras has been widely studied 
(see for example \cite{dyn0}, \cite{dyn}, \cite{minchenko}, \cite{graaf_sss}).
In order to describe the main results in this area we need to introduce some terminology.
Let $\tilde\g^c$ be a semisimple complex Lie algebra, with adjoint group
$\widetilde{G}^c$ (this is the group of inner automorphisms). Two subalgebras
$\g_1^c,\g_2^c\subset\tilde\g^c$ are said to be {\em equivalent} if
there is an $\eta\in \widetilde{G}^c$ with $\eta(\g_1^c) = \g_2^c$. They
are called {\em linearly equivalent} if for all representations $\rho : \tilde\g^c \to
\gl(V^c)$ we have that the subalgebras $\rho(\g_1^c)$, $\rho(\g_2^c)$ are
conjugate under $\GL(V^c)$. A subalgebra of $\tilde\g^c$ is called {\em regular} if it
is normalized by a Cartan subalgebra of $\tilde\g^c$. An $S$-subalgebra is a subalgebra
not contained in a regular subalgebra. We have the following:
\begin{itemize}
\item There is an algorithm to determine the regular semisimple subalgebras of $\tilde\g^c$,
up to equivalence (\cite{dyn}).
\item The maximal semisimple $S$-subalgebras of the simple Lie algebras of classical type
(\cite{dyn0}), and the semisimple $S$-subalgebras of the simple Lie algebras of exceptional type
(\cite{dyn}) have been classified up to equivalence.
\item The simple subalgebras of the Lie algebras of exceptional type have been classified
up to equivalence (\cite{minchenko}).
\item The semisimple subalgebras of the simple Lie algebras of ranks not exceeding 8
have been classified up to linear equivalence (\cite{graaf_sss}).
\end{itemize}

Now let $\tilde\g$ be a real semisimple Lie algebra with adjoint group $\widetilde{G}$.
A classification of the semisimple subalgebras of $\tilde\g$, up to $\widetilde{G}$-conjugacy,
appears to be completely out of reach. Therefore we consider a weaker problem. Note that
if $\g\subset \tilde\g$, then also for the compexifications,
$\g^c = \C\otimes \g$, $\tilde\g^c =\C\otimes\tilde\g$ we have that 
$\g^c\subset \tilde \g^c$. So assume that we know an inclusion $\g^c \subset \tilde\g^c$. 
This leads to the following problem:
let $\tilde\g^c$ be a complex semisimple Lie algebra, and $\g^c$ a complex 
semisimple subalgebra of it. Let $\g\subset \g^c$ be a real form of $\g^c$.
List, up to isomorphism, all real forms $\tilde\g \subset \tilde\g^c$ of
$\tilde\g^c$ such that $\g\subset \tilde\g$. 

We recall the following fact (\cite{onishchik}, \S2, Proposition 1): 
let $\tilde\g, \tilde\g'\subset \tilde\g^c$
be two real forms of $\tilde\g^c$. Then $\tilde\g$ and $\tilde\g'$ are 
isomorphic if and only if there is a $\phi\in \Aut(\tilde\g^c)$ such that
$\phi(\tilde\g)= \tilde\g'$.

Because of this we can reformulate the problem as follows: let $\veps : 
\g^c\hookrightarrow \tilde\g^c$ be an embedding of complex semisimple Lie
algebras. Let $\g\subset \g^c$ be a real form. List, up to isomorphism,
all real forms $\tilde\g$ of $\tilde\g^c$ such that there is a 
$\phi\in \Aut(\tilde\g^c)$ with $\phi(\veps(\g))\subset \tilde\g$.
This is the main problem that we consider in this paper. 

Let $\tilde\g_1,\ldots,\tilde\g_m$ be the non-compact real forms of $\tilde\g^c$
(i.e., each non-compact real form of $\tilde\g^c$ is isomorphic to exactly one $\tilde\g_i$).
In our setting the $\tilde\g_i$ are given by a basis and a multiplication table 
(see Section \ref{sec:comp}). In this paper we describe algorithmic methods that help
to solve the following problem: given an embedding $\veps : \g^c\hookrightarrow \tilde\g^c$,
and a real form $\g$ of $\g^c$, find all $i$ such that there is an automorphism $\phi$ of
$\tilde\g^c$ such that $\phi(\veps(\g))\subset \tilde\g_i$, along with a basis of the subalgebra
$\phi(\veps(\g))$ of $\tilde\g_i$ in terms of a basis of $\tilde\g_i$. Our algorithms reduce this
problem to finding the solution to a set of polynomial equations. We show some nontrivial
examples where it is possible to deal with these polynomial equations. Our approach is 
particularly well suited for $S$-subalgebras; at the end of the paper
we give a list of all $\tilde\g_i$, when $\tilde\g^c$ is of exceptional type and
the image of $\veps$ is an $S$-subalgebra of $\tilde\g^c$.

For real semisimple Lie algebras the problem of finding and classifying the semisimple
subalgebras has previously been considered in the literature. 
Cornwell has published a series of papers on this
topic, \cite{cornsub1}, \cite{cornsub2}, \cite{cornsub3}, \cite{cornsub4}, the last two
in collaboration with Ekins. Their methods require detailed case-by-case calculations,
and it is not entirely clear whether they are applicable to every subalgebra. For example,
no $S$-subalgebras are considered in these publications (except for some $S$-subalgebras of type
$A_1$ in the Lie algebras of types $G_2$ and $F_4$).

Komrakov (\cite{komrakov}) classified the maximal proper semisimple Lie subalgebras of a real
simple Lie algebra. However, his paper does not give an account of the methods used.
He also has a list of the real forms which contain a maximal $S$-subalgebra, for $\tilde\g^c$
of exceptional type. We find the same inclusions as Komrakov, except that in type $E_6$
we find a few more (see Section \ref{sec:Ssub}).

Now we give an outline of the paper. The next section contains concepts and constructions
from the literature that we use. We also give an algorithm to compute equivalences of 
representations of semisimple Lie algebras, which may not have been described before,
but follows immediately from the representation theory of such algebras. In Section \ref{sec:3}
we describe our method. Section \ref{sec:4} has some examples computed using our implementation.
Finally, in Section \ref{sec:Ssub} we give the list of real semisimple subalgebras of the
real simple Lie algebras of exceptional type, that correspond to $S$-subalgebras of 
the corresponding complex Lie algebras.

\subsection{Computational set up}\label{sec:comp}

We have implemented the algorithms in the language of the computer algebra system
{\sf GAP}4 (\cite{gap4}), using the package {\sf CoReLG} (\cite{corelg}). In this
system a Lie algebra is given by a basis and a multiplication table. The package
{\sf CoReLG} contains functionality for constructing all real forms of a simple
complex Lie algebra (see \cite{dfg}). So in our implementations we work with 
Lie algebras given in that way. An element of an algebra is represented by its coefficient
vector relative to the given basis of the algebra. Subspaces (in particular, subalgebras)
are given by a basis. Linear maps (in particular, automorphisms) are defined
with respect to the given basis of the Lie algebra. And so on.

Also we use the {\sf GAP}4 package {\sf SLA} (\cite{sla}), which contains a database
of the semisimple subalgebras of the simple complex Lie algebras of ranks not exceeding 8.
We use this database to obtain the starting data for our algorithms: the embeddings
$\veps : \g^c \hookrightarrow \tilde\g^c$.

\subsection{Notation}

Throughout we endow symbols denoting vector spaces or algebras over the complex numbers 
by a superscript $c$. If this superscript is absent, then the vector space, or algebra,
is defined over the reals. In the above discussion we have already used this convention.

We use standard notation and terminology for Lie algebras, as can for instance be found
in the books of Humphreys (\cite{hum}) and Onishchik (\cite{onishchik}). 
Lie algebras will be denoted by fraktur symbols (like $\g$). The adjoint representation
of a Lie algebra $\g$ is defined by $\ad_\g x(y) = [x,y]$. We also just use $\ad$ if no
confusion can arise about which Lie algebra is meant. 

We denote the real forms of the simple Lie algebras using the convention of \cite{knapp02},
Appendix C.3 and C.4, see also \cite{onishchik}, Table 5. 

We denote the imaginary unit in $\C$ by $\imath$.

\section{Preliminaries}

\subsection{Semisimple real Lie algebras}\label{subsec:pre1}

Let $\g^c$ be a semisimple Lie algebra over $\C$. Let $\hh^c$ be a fixed Cartan
subalgebra of $\g^c$, and let $\Phi$ denote the corresponding root system.
By $\Delta = \{\alpha_1,\ldots,\alpha_\ell\}$ we denote a basis of simple roots
of $\Phi$, corresponding to a choice of positive roots $\Phi^+$.
For $\alpha,\beta\in \Phi$ we let $r,q$ be the maximal integers 
such that $\beta-r\alpha$ and $\beta +q\alpha$ lie in $\Phi$, and we define
$\langle \beta,\alpha^\vee\rangle=r-q$. For $\alpha\in \Phi$ we denote by
$\g^c_\alpha$ the corresponding root space in $\g^c$.

There is a basis of $\g^c$ formed by elements $h_1,\ldots,h_\ell\in \hh^c$, 
along with $x_\alpha\in \g^c_\alpha$ for $\alpha\in \Phi$ such that
\begin{align*}
&[h_i,h_j] = 0\\
&[h_i,x_\alpha] = \langle \alpha,\alpha_i^\vee\rangle  x_\alpha\\
&[x_\alpha,x_{-\alpha}] = h_\alpha\\
&[x_\alpha,x_\beta] = N_{\alpha,\beta} x_{\alpha+\beta},
\end{align*}
where $h_\alpha$ is the unique element in $[\g^c_\alpha,\g^c_{-\alpha}]$ with
$[h_\alpha,x_\alpha]= 2x_\alpha$. This implies that $h_{\alpha_i} = h_i$ for 
$1\leq i\leq \ell$. Furthermore, $N_{\alpha,\beta} = \pm (r+1)$, where
$r$ is the maximal integer with $\alpha-r\beta\in\Phi$. 
Also we define $x_\gamma=0$ if $\gamma\not\in\Phi$. 

A basis with these properties is called a {\em Chevalley basis} of $\g^c$
(see \cite{hum}, \S 25.2).

Let $\imath\in \C$ denote the imaginary unit, and consider the elements
\begin{equation}\label{eq:cpt}
\imath h_1,\ldots, \imath h_\ell \text{ and } x_\alpha-x_{-\alpha},
\imath(x_\alpha+x_{-\alpha}) \text{ for $\alpha\in \Phi^+$}.
\end{equation}
Let $\uu$ denote the $\R$-span of these elements. Then $\uu$ is closed under
the Lie bracket, and hence is a real Lie algebra. This Lie algebra is compact,
and called a {\em compact form} of $\g^c$. We have $\g^c = \uu + \imath \uu$
and we define a corresponding map $\tau : \g^c \to \g^c$ by $\tau(x+\imath y) = 
x-\imath y$, for $x,y\in\uu$. This map is called the {\em conjugation of 
$\g^c$} with respect to $\uu$.

Let $\theta : \g^c\to \g^c$ be an automorphism of order 2, commuting with 
$\tau$. Then $\theta$ maps $\uu$ into itself, and hence $\uu = \uu_1 + 
\uu_{-1}$, where $\uu_k$ denotes the $\theta$-eigenspace with eigenvalue $k$.
Set $\kk = \uu_1$ and $\pp = \imath\uu_{-1}$, and $\g = \kk\oplus \pp$. Then $\g$
is a real subspace of $\g^c$, closed under the Lie bracket. So it is a real 
form of $\g^c$. Also here we get a conjugation, $\sigma : \g^c \to \g^c$,
by $\sigma(x+\imath y) = x-\imath y$ for $x,y\in \g$. The maps $\sigma$,
$\tau$ and $\theta$ pairwise commute, all have order 2 and 
$\tau = \theta\sigma$. 

Every real form of $\g^c$ can be constructed in this way
(see \cite{onishchik}). The decomposition $\g = \kk\oplus \pp$ is called a
{\em Cartan decomposition}. The restriction of $\theta$ to $\g$ is called a 
{\em Cartan involution} of $\g$. 

\subsection{Canonical generators}\label{subsec:pre2}

For $1\leq i\leq \ell$ let $g_i$, $x_i$, $y_i$ be elements of $\g^c$ such that
\begin{equation}\label{eqn:2}
\left. \begin{aligned}
& [g_i,g_j] = 0\\
& [g_i,x_j] = \langle \alpha_j, \alpha_i^\vee\rangle x_j \\
& [g_i,y_j] = -\langle \alpha_j, \alpha_i^\vee\rangle y_j \\
& [x_i,y_j] = \delta_{ij} g_i. 
\end{aligned}\right.
\end{equation}
A set of $3\ell$ elements with these commutation relations is called a 
{\em canonical generating set} of $\g^c$ (\cite{jac}, \S IV.3). 
We have the following:
\begin{itemize}
\item A canonical generating set of $\g^c$ generates $\g^c$.
\item Sending one canonical generating set to another one uniquely extends
to an automorphism of $\g^c$.
\end{itemize}

An example of a canonical generating set is the following: let $g_i=h_i$, $x_i 
= x_{\alpha_i}$, $y_i = x_{-\alpha_i}$ (where we use the notation of Section \ref{subsec:pre1}).

\subsection{Computing endomorphism spaces}\label{subsec:pre3}

Here $\g^c$ is a complex semisimple Lie algebra with canonical generators $h_i$, $x_i$, $y_i$
for $1\leq i\leq \ell$. Let $\hh^c$ denote the span of the $h_i$ (a Cartan subalgebra of $\g^c$).
First we review some of the basic facts of the representation theory
of $\g^c$ (see \cite{hum}, \S 20). 

Let $\rho : \g^c \to \gl(V^c)$ be a finite-dimensional representation of $\g^c$. 
For $\mu \in (\hh^c)^*$ we set $V^c_\mu = \{ v\in V^c \mid \rho(h)v = \mu(h) v\}$.
If $V^c_\mu\neq 0$ then $\mu$ is called a {\em weight} of $\rho$ (or of the $\g^c$-module
$V^c$), and $V^c_\mu$ is the 
corresponding {\em weight space}. Elements of $V^c_\mu$ are called weight vectors of weight $\mu$.
We have that $V^c$ is the sum of its weight spaces. Let $v\in V^c_\mu$ and suppose that 
$\rho(x_i)v=0$
for $1\leq i\leq \ell$. Then $v$ is called a {\em highest weight vector}, and $\mu$ 
a {\em highest weight} of $\rho$. 

Suppose that $\rho$ is irreducible. Then there is a unique highest weight $\lambda$. Moreover,
$\dim V^c_\lambda =1$. Let $v_\lambda\neq 0$ be a highest weight vector of weight $\lambda$.
Then there is a set $S_\lambda$ of sequences $(i_1,\ldots,i_k)$,
with $k\geq 0$ and $1\leq i_r \leq \ell$ such that the elements $\rho(y_{i_1})\cdots \rho(y_{i_k})
v_\lambda$ form a basis of $V^c$. We note that $S_\lambda$ is not uniquely determined. But
for each $\lambda$ we fix one $S_\lambda$. Now let $\varphi : \g^c \to \gl(W^c)$ be another 
irreducible representation of $\g^c$ with the same highest weight $\lambda$. Let $w_\lambda\neq 0$
be a highest weight vector of weight $\lambda$. Define the linear map $A : V^c \to W^c$ that maps
$\rho(y_{i_1})\cdots \rho(y_{i_k})v_\lambda$ to $\varphi(y_{i_1})\cdots \varphi(y_{i_k})w_\lambda$,
for all $(i_1,\ldots,i_k)\in S_\lambda$. 

\begin{lem}\label{lem:rep}
We have $A\rho(x) = \varphi(x)A$ for all $x\in \g^c$.
\end{lem}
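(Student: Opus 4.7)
The plan is to reduce the claim to the classical fact that a finite-dimensional irreducible representation of $\g^c$ is determined up to isomorphism by its highest weight, and then identify the resulting module isomorphism with the map $A$ prescribed in the statement.

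First, since $V^c$ and $W^c$ are both irreducible $\g^c$-modules with highest weight $\lambda$, there exists a $\g^c$-module isomorphism $B : V^c \to W^c$. Because $\dim V^c_\lambda = \dim W^c_\lambda = 1$, we have $B(v_\lambda) = c\,w_\lambda$ for some nonzero $c\in\C$, and after rescaling $B$ we may assume $B(v_\lambda) = w_\lambda$. Since $B$ commutes with the two actions, for any indices $i_1,\ldots,i_k$,
\[
B\bigl(\rho(y_{i_1}) \cdots \rho(y_{i_k}) v_\lambda\bigr) = \varphi(y_{i_1}) \cdots \varphi(y_{i_k}) B(v_\lambda) = \varphi(y_{i_1}) \cdots \varphi(y_{i_k}) w_\lambda.
\]
In particular, for every $(i_1,\ldots,i_k) \in S_\lambda$ the map $B$ sends the basis vector $\rho(y_{i_1}) \cdots \rho(y_{i_k}) v_\lambda$ of $V^c$ to the element which, by definition, is $A\bigl(\rho(y_{i_1}) \cdots \rho(y_{i_k}) v_\lambda\bigr)$. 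As these basis vectors span $V^c$, we conclude $A = B$, and hence $A\rho(x) = \varphi(x)A$ for all $x\in\g^c$.

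The point of this approach is that invoking the uniqueness of irreducibles up to isomorphism removes the need for any direct computation: the map $A$ is forced to coincide with the rescaled isomorphism $B$. A more hands-on alternative, verifying $A\rho(g) = \varphi(g)A$ for each canonical generator $g\in\{h_i, x_i, y_i\}$ on each basis vector, would require straightening $\rho(y_i)\rho(y_{i_1})\cdots\rho(y_{i_k})v_\lambda$ back into the fixed basis indexed by $S_\lambda$ and handling the $\ssl_2$-triple relations when $\rho(x_i)$ is applied; this is more cumbersome but conceptually routine. I therefore do not expect any real obstacle; the only thing to be careful about is the normalization $B(v_\lambda) = w_\lambda$, which is what matches $B$ with the combinatorial rule defining $A$.
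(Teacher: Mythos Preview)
Your proof is correct and essentially identical to the paper's: both invoke the existence of a module isomorphism $V^c\to W^c$ coming from uniqueness of irreducibles with a given highest weight, normalize it so that $v_\lambda\mapsto w_\lambda$, and observe that this forces the map to coincide with $A$ on the chosen basis. The only cosmetic difference is that the paper calls the isomorphism $A'$ and writes $A=\tfrac{1}{a}A'$ rather than rescaling up front.
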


\begin{proof}
Since $\rho$, $\varphi$ are irreducible representations of $\g^c$ with the same highest weight,
there exists an isomorphism, that is, a bijective linear map $A' : V^c\to W^c$ with
$A'\rho(x)v = \varphi(x)A'v$ for all $x\in \g^c$ and $v\in V^c$. This implies that 
$A' v_\lambda = a w_\lambda$ where $a\in \C$, $a\neq 0$. It also follows that $A=\tfrac{1}{a} A'$,
whence the statement.
\end{proof}

Now we drop the assumption that $\rho$ is irreducible. Let $\lambda_1,\ldots,\lambda_r$
be the distinct highest weights of $\rho$. For $1\leq j\leq r$ let $v_{j,1},\ldots,v_{j,m_j}$ be
a linearly independent set of highest weight vectors of highest weight $\lambda_j$.
So each $v_{j,l}$ generates an irreducible $\g^c$-submodule,
denoted $V(\lambda_j,l)$, of $V^c$, and $V^c$ is their direct sum. 
We use the basis of $V^c$ consisting of the elements
$\rho(y_{i_1})\cdots \rho(y_{i_k})v_{j,l}$, for $(i_1,\ldots,i_k)\in S_{\lambda_j}$.
For $1\leq j \leq r$ and $1\leq s,t\leq m_j$ we let $A_j^{s,t}$ be the linear map $V^c\to V^c$
that maps $\rho(y_{i_1})\cdots \rho(y_{i_k})v_{j,s}$ to $\rho(y_{i_1})\cdots \rho(y_{i_k})v_{j,t}$
for $(i_1,\ldots,i_k)\in S_{\lambda_j}$, and it maps all other basis elements to 0.
Then $A_j^{s,t}$ is an isomorphism of $V(\lambda_j,s)$ to $V(\lambda_j,t)$, and it maps 
all other submodules $V(\lambda_k,u)$ to $0$. So by Lemma \ref{lem:rep},
$A_j^{s,t}\rho(x) = \rho(x)A_j^{s,t}$ for all $x\in \g^c$, i.e., it is contained in
$$\End_\rho(V^c) = \{ A \in \End(V^c) \mid A\rho(x) = \rho(x)A \text{ for all } x\in \g^c\}.$$

\begin{lem}
The $A_j^{s,t}$ for $1\leq j \leq r$ and $1\leq s,t\leq m_j$ form a basis of $\End_\rho(V^c)$.
\end{lem}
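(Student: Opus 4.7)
My plan is to prove linear independence and spanning separately, using the decomposition $V^c = \bigoplus_{j,l} V(\lambda_j,l)$ and the fact that every $\g^c$-equivariant endomorphism is determined by what it does to highest weight vectors.

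For linear independence, I would evaluate a hypothetical linear relation $\sum_{j,s,t} c_{j,s,t} A_j^{s,t} = 0$ on each of the distinguished highest weight vectors $v_{j_0,s_0}$. By construction $A_j^{s,t}(v_{j_0,s_0}) = \delta_{j,j_0}\delta_{s,s_0}\, v_{j,t}$, so the relation collapses to $\sum_t c_{j_0,s_0,t}\, v_{j_0,t} = 0$. Since the $v_{j_0,1},\ldots,v_{j_0,m_{j_0}}$ were chosen linearly independent, all $c_{j_0,s_0,t}$ vanish.

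For spanning, I would take an arbitrary $A\in \End_\rho(V^c)$ and first analyze $A(v_{j,s})$. Because $A$ commutes with $\rho(x_i)$ and $\rho(h_i)$, the vector $A(v_{j,s})$ is again a highest weight vector of weight $\lambda_j$ (or zero). The key intermediate claim is that the space of all highest weight vectors of weight $\lambda_j$ in $V^c$ is precisely $\bigoplus_l \C v_{j,l}$: indeed, in each irreducible summand $V(\lambda_k,l)$ the highest weight vectors span a one-dimensional space, living in weight $\lambda_k$, and contribute to weight $\lambda_j$ only if $\lambda_k=\lambda_j$. Hence one may write $A(v_{j,s}) = \sum_t c_{j,s,t}\, v_{j,t}$ for unique scalars $c_{j,s,t}$.

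Finally I would argue that $A = \sum_{j,s,t} c_{j,s,t} A_j^{s,t}$. Since each irreducible summand $V(\lambda_j,s)$ is generated from $v_{j,s}$ by applying the $\rho(y_i)$, and $A$ commutes with these, the value of $A$ on a basis vector $\rho(y_{i_1})\cdots\rho(y_{i_k})v_{j,s}$ equals $\rho(y_{i_1})\cdots\rho(y_{i_k})A(v_{j,s}) = \sum_t c_{j,s,t}\,\rho(y_{i_1})\cdots\rho(y_{i_k})v_{j,t}$, which is exactly $\sum_t c_{j,s,t} A_j^{s,t}$ applied to the same basis vector. Comparing on every basis element of $V^c$ identifies $A$ with that linear combination. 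The only subtle step is the claim about highest weight vectors; the rest is bookkeeping ensured by the Chevalley-type generation property used already in Lemma~\ref{lem:rep}.
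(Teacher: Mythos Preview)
Your proof is correct and follows essentially the same route as the paper's: both argue that an $A\in\End_\rho(V^c)$ is determined by the images of the highest weight vectors $v_{j,s}$, that these images must again be highest weight vectors of weight $\lambda_j$ and hence lie in the span of the $v_{j,t}$, and that this forces $A=\sum_{j,s,t} c_{j,s,t}A_j^{s,t}$. The paper's version is considerably terser---it declares linear independence ``obvious'' and leaves the reconstruction of $A$ from its values on the $v_{j,s}$ implicit---whereas you spell out the evaluation argument for independence and the intermediate claim identifying the highest weight vectors of weight $\lambda_j$ with $\bigoplus_t \C v_{j,t}$, but there is no genuine difference in strategy.
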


\begin{proof}
Let $A\in \End_\rho(V^c)$. 
Then $A$ is determined by the images $Av_{j,s}$ for $1\leq j\leq r$,
$1\leq s\leq m_j$. But $A$ maps (highest) weight vectors to (highest) weight vectors
of the same weight. So there are $\alpha_j^{s,t}\in\C$ such that
$$Av_{j,s} = \alpha_j^{s,1} v_{j,1} +\cdots + \alpha_j^{s,m_j} v_{j,m_j}.$$
It follows that $A = \sum_{j,s,t} \alpha_j^{s,t} A_j^{s,t}$. It is obvious that the $A_j^{s,t}$ are
linearly independent.
\end{proof}

Now consider a second representation $\varphi : \g^c \to \gl(V^c)$ that is equivalent to $\rho$, 
i.e., there is a bijective linear map $A_0 : V^c\to V^c$ such that $A_0\rho(x) = \varphi(x)A_0$ for
all $x\in \g^c$. In particular, $A_0$ lies in the space
$$\End_{\rho,\varphi}(V^c) = \{ A\in \End(V^c) \mid A\rho(x) = \varphi(x)A \text{ for all }
x\in \g^c\}.$$
We want to find a basis of $\End_{\rho,\varphi}(V^c)$. A first observation is 
that $\End_{\rho,\varphi}(V^c) = \{ A_0A \mid A\in \End_\rho(V^c)\}$. So since above we
have seen how to construct a basis of $\End_\rho(V^c)$, the problem boils down to 
constructing $A_0$. Since $\varphi$ is equivalent to $\rho$ there are $w_{j,1},\ldots,w_{j,m_j}$ 
forming a basis of the weight space with weight $\lambda_j$, relative to the representation
$\varphi$. Applying Lemma \ref{lem:rep} to each submodule $V(\lambda_j,l)$ we see that
mapping $v_{j,l}$ to $w_{j,l}$ (for all $j,l$) uniquely extends to a bijective linear map
$A_0 : V^c\to V^c$, contained in $\End_{\rho,\varphi}(V^c)$.

\subsection{On solving polynomial equations}\label{subsec:pre4}

In the end, the solution to our problem will be given by a set of polynomial equations,
which we need to solve. For this, to the best of our knowledge, no good algorithm is available.
So in each particular case
we have to look at the equations and see whether we can solve them. However, there are
some algorithms that can help with that, most importantly the algorthm for constructing a 
Gr\"obner basis (see \cite{clo}). Let $F$ be a field, and $R= F[x_1,\ldots,x_m]$ the polynomial
ring in $m$ indeterminates over $F$. Let $P\subset R$ be a finite set of polynomials,
and consider the polynomial equations $p=0$ for $p\in P$. We want to determine the set
$V = \{ v\in F^m \mid p(v)=0 \text{ for all } p\in P\}$.
Let $G$ be any other generating set
of the ideal $I$ of $R$ generated by $P$. Then solving $p=0$ for all $p\in P$ is equivalent to 
solving $g=0$ for all $g\in G$ (the set of solutions is the same). A convenient choice
for $G$ is a Gr\"obner basis of $I$ with respect to a lexicographical monomial order. Then $G$ has
a triangular form, which, in most cases, makes solving the equations easier. We refer to
\cite{clo} for a more detailed discussion.

\section{Construction of embeddings}\label{sec:3}

Here we turn to our main problem, stated in Section \ref{sec:1}.

Let $\g^c$, $\tilde\g^c$ be complex semisimple Lie algebras, and suppose that
we have an embedding $\veps : \g^c \hookrightarrow \tilde\g^c$. Let $\hh^c$ be
a fixed Cartan subalgebra of $\g^c$, and let $\Phi$ denote the corresponding
root system. Let $h_1,\ldots,h_\ell$, and $x_\alpha$ for $\alpha\in \Phi$
be a Chevalley basis of $\g^c$. Let $\uu$ be the compact form spanned by the
elements \eqref{eq:cpt}, with corresponding conjugation $\tau$. Let $\g$ be
a real form of $\g^c$ with Cartan decomposition $\g = \kk\oplus \pp$, and
corresponding involution $\theta$, and conjugation $\sigma$. We assume that
$\g$ and $\uu$ are compatible, i.e., $\tau$ and $\sigma$ commute, and 
$\theta = \tau\sigma$ and $\uu = \kk\oplus \imath \pp$. 

\begin{prop}\label{prop:eps}
Let $\tilde\g\subset \tilde\g^c$ be a real form of $\tilde\g^c$ such that 
$\veps(\g)\subset \tilde \g$. Then there are a compact form $\tilde\uu\subset \tilde\g^c$
of $\tilde\g^c$, with conjugation $\tilde\tau : \tilde\g^c\to\tilde\g^c$, and an involution
$\tilde\theta$ of $\tilde\g^c$ such that
\begin{enumerate}
\item \label{eps1}  $\veps(\uu)\subset \tilde \uu$, 
\item \label{eps2} $\veps\theta = \tilde\theta\veps$, 
\item \label{eps3} $\tilde\theta \tilde\tau = \tilde\tau\tilde\theta$, 
\item \label{eps4} there is a Cartan decomposition $\tilde\g = \tilde\kk \oplus \tilde \pp$, 
such that the restriction of $\tilde\theta$ to $\tilde\g$ 
is the corresponding Cartan involution, and
$\tilde\uu = \tilde \kk\oplus \imath \tilde\pp$.
\end{enumerate} 
Conversely, if $\tilde\uu\subset \tilde\g$ is a compact form, with corresponding conjugation
$\tilde\tau$, and $\tilde\theta$ is an involution of $\tilde\g^c$ such that \eqref{eps1}, 
\eqref{eps2} and \eqref{eps3} hold,
then $\tilde\theta$ leaves $\tilde\uu$ invariant, and setting $\tilde\kk = \tilde\uu_1$,
$\tilde\pp = \imath\tilde\uu_{-1}$ (where $\tilde\uu_k$ is the $k$-eigenspace of $\tilde\theta$),
we get that $\tilde\g = \tilde\kk\oplus\tilde\pp$ is a real form of $\tilde\g^c$ with 
$\veps(\g)\subset \tilde\g$. 
\end{prop}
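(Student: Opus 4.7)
I would handle the two directions separately, starting with the easier converse.

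For the converse, since $\tilde\tau$ is $\C$-antilinear of order $2$ while $\tilde\theta$ is $\C$-linear of order $2$ and commutes with $\tilde\tau$ by (3), $\tilde\theta$ preserves the fixed-point set $\tilde\uu$ of $\tilde\tau$, so $\tilde\uu=\tilde\uu_1\oplus\tilde\uu_{-1}$. A short parity check using $[\tilde\uu_k,\tilde\uu_l]\subset\tilde\uu_{kl}$ shows that with $\tilde\kk=\tilde\uu_1$ and $\tilde\pp=\imath\tilde\uu_{-1}$ the sum $\tilde\g:=\tilde\kk\oplus\tilde\pp$ is closed under the bracket, and a dimension count gives $\tilde\g^c=\tilde\g\oplus\imath\tilde\g$, so $\tilde\g$ is a real form of $\tilde\g^c$ with Cartan decomposition whose Cartan involution is $\tilde\theta|_{\tilde\g}$, and $\tilde\uu=\tilde\kk\oplus\imath\tilde\pp$ as required. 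To verify $\veps(\g)\subset\tilde\g$, I note that (2) forces $\veps$ to send $\theta$-eigenspaces into $\tilde\theta$-eigenspaces; combining with (1) this gives $\veps(\kk)=\veps(\uu_1)\subset\tilde\uu_1=\tilde\kk$ and $\veps(\pp)=\imath\veps(\uu_{-1})\subset\imath\tilde\uu_{-1}=\tilde\pp$.

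For the forward direction the essential input is a Mostow-type matching theorem (see \cite{onishchik}): any semisimple subalgebra of a real semisimple Lie algebra admits a compatible Cartan decomposition. Applied to $\veps(\g)\subset\tilde\g$, this produces a Cartan involution $\tilde\theta_0$ of $\tilde\g$ stabilizing $\veps(\g)$, whose restriction along $\veps$ is a Cartan involution $\theta_0$ of $\g$. Since any two Cartan involutions of $\g$ are conjugate by an inner automorphism, there is an inner automorphism $\psi$ of $\g$ with $\theta=\psi\theta_0\psi^{-1}$. Writing $\psi=\prod_i\exp(\ad_\g x_i)$ with $x_i\in\g$ and setting $\Psi=\prod_i\exp(\ad_{\tilde\g}\veps(x_i))$, equivariance of $\veps$ gives $\Psi\circ\veps=\veps\circ\psi$ on $\g$. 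Hence $\tilde\theta':=\Psi\tilde\theta_0\Psi^{-1}$ is again a Cartan involution of $\tilde\g$, and it satisfies $\tilde\theta'\veps=\veps\theta$ on $\g$.

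I then extend $\tilde\theta'$ $\C$-linearly to an involution $\tilde\theta$ of $\tilde\g^c$, let $\tilde\kk$ and $\tilde\pp$ be the $\pm1$-eigenspaces of $\tilde\theta'$ on $\tilde\g$, set $\tilde\uu=\tilde\kk\oplus\imath\tilde\pp$, and take $\tilde\tau$ to be the associated conjugation. Conditions (2) and (4) then hold by construction. For (3), both $\tilde\theta$ and $\tilde\tau$ act by $\pm1$ on each of the four summands $\tilde\kk,\tilde\pp,\imath\tilde\kk,\imath\tilde\pp$ of $\tilde\g^c$, so they commute. For (1), $\veps(\kk)\subset\tilde\kk$ and $\veps(\pp)\subset\tilde\pp$ by the same argument used in the converse, whence $\veps(\uu)=\veps(\kk)+\imath\veps(\pp)\subset\tilde\kk+\imath\tilde\pp=\tilde\uu$. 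The principal obstacle is the Mostow-style matching step, which I would invoke from the literature rather than reprove; once the Cartan involution of $\tilde\g$ has been adjusted to be compatible with $\theta$, the remaining verifications reduce to formal manipulations of eigenspace decompositions and the interplay between $\C$-linear and $\C$-antilinear involutions.
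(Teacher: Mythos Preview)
Your proposal is correct and follows essentially the same line as the paper: both directions hinge on the Karpelevich--Mostow theorem for the forward part and on straightforward eigenspace bookkeeping for the converse. The only minor difference is that the paper invokes Karpelevich--Mostow in the already-matched form (``there is a Cartan decomposition $\tilde\g=\tilde\kk\oplus\tilde\pp$ with $\veps(\kk)\subset\tilde\kk$, $\veps(\pp)\subset\tilde\pp$'', citing \cite{onishchik}, \S6, Corollary~1), whereas you cite the a-priori weaker version producing \emph{some} Cartan involution $\tilde\theta_0$ on $\tilde\g$ stabilizing $\veps(\g)$ and then explicitly conjugate via $\Psi=\prod_i\exp(\ad_{\tilde\g}\veps(x_i))$ to align it with the given $\theta$; this extra step is correct and in fact shows why the two formulations of Karpelevich--Mostow are equivalent.
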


\begin{proof}
There is a Cartan decomposition $\tilde\g = \tilde\kk\oplus\tilde\pp$ such that $\veps(\kk)
\subset \tilde\kk$, $\veps(\pp)\subset\tilde\pp$ (this is the Karpelevich-Mostow theorem, see
\cite{onishchik}, \S 6, Corollary 1).
We let $\tilde\theta$ be the involution
of $\tilde\g^c$ such that $\tilde\theta(x)=x$ for all $x\in \tilde\kk^c$, and $\tilde\theta(x)
=-x$ for all $x\in \tilde\pp^c$. Finally we set $\tilde\uu = \tilde\kk\oplus \imath\tilde\pp$.
Then the statements \eqref{eps1}, \eqref{eps2}, \eqref{eps3}, and \eqref{eps4} are all obvious. 
The converse is clear as well.
\end{proof}

Throughout this section 
let $\tilde\hh^c$ be a fixed Cartan subalgebra of $\tilde\g^c$. 
We let $\Psi$ denote the root system of
$\tilde\g^c$ with respect to $\tilde\hh^c$. 
By $g_1,\ldots,g_m$ together
with $y_\beta$, for $\beta\in\Psi$ we denote a fixed Chevalley basis of 
$\tilde\g^c$. We let $\tilde\uu$ be the compact form of $\tilde\g^c$ spanned
by $\imath g_i$, $1\leq i\leq m$, $y_\beta-y_{-\beta}$,
$\imath(y_\beta+y_{-\beta})$ for $\beta\in\Psi^+$. 

From the formulation of the main problem we see that it does not make a 
difference if we replace $\veps$ by $\phi\veps$, where $\phi\in 
\Aut(\tilde\g^c)$. The first step of our procedure is to replace $\veps$ by a $\phi\veps$
to ensure that $\veps(\uu)\subset \tilde\uu$. This is the subject of Section \ref{sec:emb1}.

In Section \ref{sec:emb2} we show how to find the involutions $\tilde\theta$ with 
Proposition \ref{prop:eps}\eqref{eps2} and \eqref{eps3}. Then Proposition \ref{prop:eps}
shows how to construct the corresponding real forms of $\tilde\g^c$.

We recall (\cite{dyn}, see also \cite{minchenko}, \cite{graaf_sss}) that
two embeddings $\veps, \veps' : \g^c\hookrightarrow \tilde\g^c$ are called
{\em equivalent} if there is an {\em inner} automorphism $\phi$ of $\tilde\g^c$
such that $\veps = \phi \veps'$. They are called {\em linearly equivalent}
if for all representations $\rho : \tilde\g^c \to \gl(V^c)$ the induced
representations $\rho\circ \veps$, $\rho\circ\veps'$ are equivalent. 
Equivalence implies linear equivalence, but the converse is not always true.
However, the cases where the same linear equivalence class splits into more than
one equivalence class are rather rare (cf. \cite{minchenko}, Theorem 7).

\subsection{Embedding the compact form}\label{sec:emb1}

Suppose that $\veps(\hh^c)\subset \tilde\hh^c$. Then 
for $\alpha\in\Phi$ there is a subset $A_\alpha\subset \Psi$ such that
\begin{equation}\label{eqn:3}
\left.\begin{aligned}
\veps(x_\alpha) &= \sum_{\beta\in A_{\alpha}} a_{\alpha,\beta} y_\beta\\
\veps(x_{-\alpha}) &= \sum_{\beta\in A_{\alpha}} b_{\alpha,\beta} y_{-\beta},\\
\end{aligned}\right.
\end{equation}
where $a_{\alpha,\beta}, b_{\alpha,\beta}\in \C$ (in fact, $A_\alpha$ consists
of all $\beta$ which restricted to $\veps(\hh^c)$ equal $\alpha$). 

We say that the embedding $\veps$ is {\em balanced} if $\veps(\hh^c)\subset \tilde\hh^c$ and
for all $\alpha\in \Phi$, and $\beta\in A_\alpha$ we have $b_{\alpha,\beta} = \bar 
a_{\alpha,\beta}$ (complex conjugation). Of course, this notion depends on the choices of
Cartan subalgebras and Chevalley bases in $\g^c$, $\tilde\g^c$. If we use the term ``balanced''
without mentioning these, then we use the choices fixed at the outset. Otherwise
we explicitly mention a different choice made.

\begin{lem}\label{lem:bal}
If $\veps$ is balanced then $\veps(\uu)\subset \tilde\uu$. Conversely, if $\veps(\hh^c)
\subset \tilde\hh^c$ and $\veps(\uu)\subset \tilde\uu$, then $\veps$ is balanced.
\end{lem}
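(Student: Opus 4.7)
My plan is to rephrase $\veps(\uu)\subset\tilde\uu$ as an equation of $\C$-antilinear operators and then verify it on a Chevalley basis of $\g^c$. Since $\uu$ is exactly the $\tau$-fixed real subspace of $\g^c$, and likewise $\tilde\uu$ is the $\tilde\tau$-fixed subspace of $\tilde\g^c$, and since $\veps$ is $\C$-linear, $\veps(\uu)\subset\tilde\uu$ is equivalent to the identity $\tilde\tau\circ\veps=\veps\circ\tau$ holding on $\uu$; by $\C$-antilinear extension, this is the same as asking the identity to hold on all of $\g^c$. From the generators \eqref{eq:cpt} I would read off $\tau(h_i)=-h_i$, $\tau(x_\alpha)=-x_{-\alpha}$, and analogously $\tilde\tau(g_j)=-g_j$, $\tilde\tau(y_\beta)=-y_{-\beta}$, reducing the problem to checking the operator identity on the Chevalley basis.

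For the forward direction, assuming $\veps$ is balanced, the root-vector check is immediate: $\tilde\tau\veps(x_\alpha) = -\sum_\beta \bar a_{\alpha,\beta}\,y_{-\beta}$ equals $\veps\tau(x_\alpha) = -\veps(x_{-\alpha}) = -\sum_\beta b_{\alpha,\beta}\,y_{-\beta}$ precisely because $b_{\alpha,\beta}=\bar a_{\alpha,\beta}$. The Cartan check needs a little more work. I would write $h_i = h_{\alpha_i} = [x_{\alpha_i},x_{-\alpha_i}]$ and expand
\[
\veps(h_i) \;=\; \sum_{\beta,\beta'} a_{\alpha_i,\beta}\,b_{\alpha_i,\beta'}\,[y_\beta,y_{-\beta'}].
\]
Each off-diagonal bracket ($\beta\neq\beta'$) sits in a root space of $\tilde\g^c$ disjoint from $\tilde\hh^c$, whereas the left-hand side lies in $\tilde\hh^c$; hence those contributions must sum to zero, leaving $\veps(h_i) = \sum_\beta |a_{\alpha_i,\beta}|^2\,h_\beta$ under the balanced hypothesis. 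Since in a Chevalley basis each $h_\beta$ is an integral linear combination of $g_1,\ldots,g_m$, the element $\veps(h_i)$ has real coordinates in the $g_j$, which gives $\tilde\tau\veps(h_i) = -\veps(h_i) = \veps\tau(h_i)$ and completes this direction.

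For the converse, assume $\veps(\hh^c)\subset\tilde\hh^c$ and $\veps(\uu)\subset\tilde\uu$; then the reformulation gives $\tilde\tau\circ\veps=\veps\circ\tau$, and evaluating both sides on $x_\alpha$ and matching the coefficient of $y_{-\beta}$ reads off $b_{\alpha,\beta}=\bar a_{\alpha,\beta}$ at once. The main obstacle, such as it is, lies entirely in the Cartan part of the forward direction: the definition of balanced only constrains root-vector coefficients, so one has to extract from it the reality of the coordinates of $\veps(h_i)$ in the $g_j$-basis. This is exactly what the bracket identity above delivers, combined with the cancellation of the off-diagonal terms and the integrality of the coroot expansions in a Chevalley basis.
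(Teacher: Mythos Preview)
Your proof is correct. The overall plan---handling root vectors and Cartan elements separately---matches the paper's, but two details differ. First, you package the inclusion $\veps(\uu)\subset\tilde\uu$ as the operator identity $\tilde\tau\circ\veps=\veps\circ\tau$ and then check it on a Chevalley basis, whereas the paper computes $\veps(x_\alpha-x_{-\alpha})$ directly in the compact basis of $\tilde\uu$; these are equivalent bookkeeping, and the converse direction is essentially identical in both treatments. Second, and more interestingly, for the Cartan part you actually \emph{use} the balanced hypothesis: from $h_i=[x_{\alpha_i},x_{-\alpha_i}]$ and $\veps(\hh^c)\subset\tilde\hh^c$ you extract $\veps(h_i)=\sum_\beta |a_{\alpha_i,\beta}|^2\,h_\beta$, which is visibly real in the $g_j$-basis. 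The paper instead argues, via $\ssl_2$-representation theory and the inverse Cartan matrix, that $\veps(h_i)$ always has \emph{rational} coordinates in the $g_j$---a statement that holds for any embedding with $\veps(\hh^c)\subset\tilde\hh^c$, independent of the balanced condition. So the paper's Cartan argument is slightly stronger (rationality rather than mere reality, and it does not consume the hypothesis), while yours is more self-contained and exploits the balanced structure directly.
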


\begin{proof}
By standard arguments one can show that $\veps(h_i)$ is a $\Q$-linear combination of 
the $g_j$. (Set $x=\veps(x_{\alpha_i})$, $y=\veps(x_{-\alpha_i})$, $h=\veps(h_i)$. Then $[x,y]=h$,
$[h,x]=2x$, $[h,y]=-2y$. So by $\ssl_2$-representation theory the eigenvalues of $\ad_{\tilde\g^c}
h$ are integers. 
Let $\{\beta_1,\ldots,\beta_m\}$ be a basis of simple roots of $\Psi$, with
corresponding Cartan matrix $\widetilde{C}$. Then $\beta_j(h)\in \Z$ for all $j$.  
Furthermore, if we write $h = a_1g_1 +\cdots +a_mg_m$,
then we get that the vector $(a_1,\ldots,a_m)$ is $\widetilde{C}^{-1}$ times the vector
$(\beta_1(h),\ldots,\beta_m(h))$. So $a_j\in \Q$.)
In particular, $\veps(\imath h_i)$ lies in the $\R$-span of
$\imath g_1,\ldots,\imath g_m$. 

Also, for $\alpha\in \Phi^+$ we have
\begin{equation}\label{eqn:4}
\left.\begin{aligned}
\veps(x_\alpha-x_{-\alpha}) &= \sum_{\beta\in A_\alpha} a_{\alpha,\beta}y_\beta
-b_{\alpha,\beta} y_{-\beta}\\
&= \sum_{\beta\in A_\alpha} \frac{a_{\alpha,\beta}+b_{\alpha,\beta}}{2}(y_\beta-
y_{-\beta}) - \imath \frac{a_{\alpha,\beta}-b_{\alpha,\beta}}{2}\imath (y_\beta+
y_{-\beta}).
\end{aligned}\right.
\end{equation}
We see that all coefficients lie in $\R$, whence $\veps(x_\alpha-x_{-\alpha})
\in \tilde\uu$. The argument for $\veps(\imath(x_\alpha+x_{-\alpha}))$ is 
enterily similar.

For the converse, from \eqref{eqn:4}
we get that $a_{\alpha,\beta}+b_{\alpha,\beta}\in \R$ and $a_{\alpha,\beta}-
b_{\alpha,\beta}\in \imath \R$. That implies $b_{\alpha,\beta} = \bar 
a_{\alpha,\beta}$.
\end{proof}

The next lemma says that the automorphism that we are after exists. 

\begin{lem}
There exists an inner automorphism $\phi$ of $\tilde\g^c$ such that 
$\phi\veps$ is balanced.
\end{lem}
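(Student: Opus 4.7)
The plan is to construct $\phi$ as a product $\phi_2\phi_1$ of two inner automorphisms of $\tilde\g^c$. The first, $\phi_1$, will ensure $\phi_1\veps(\uu)\subset \tilde\uu$; the second, $\phi_2$, will preserve $\tilde\uu$ while moving $\phi_1\veps(\hh^c)$ into $\tilde\hh^c$. Once both containments hold, Lemma \ref{lem:bal} will immediately give that $\phi\veps$ is balanced, so the two conditions are really all we need to arrange.

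For $\phi_1$, view $\tilde\g^c$ as a real semisimple Lie algebra. Its Cartan decompositions are exactly of the form $\tilde\uu'\oplus \imath\tilde\uu'$, where $\tilde\uu'$ runs over the compact forms of $\tilde\g^c$. The subalgebra $\veps(\g^c)\subset\tilde\g^c$ is a real semisimple subalgebra with Cartan decomposition $\veps(\uu)\oplus\imath\veps(\uu)$. The Karpelevich--Mostow theorem (the same tool already invoked in the proof of Proposition \ref{prop:eps}) then provides a Cartan decomposition of $\tilde\g^c$ that restricts to this one, i.e., a compact form $\tilde\uu'\subset \tilde\g^c$ containing $\veps(\uu)$. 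Since any two compact forms of $\tilde\g^c$ are conjugate by an inner automorphism, there is an inner $\phi_1\in \Aut(\tilde\g^c)$ with $\phi_1(\tilde\uu')=\tilde\uu$, and hence $\phi_1\veps(\uu)\subset \tilde\uu$.

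For $\phi_2$, replace $\veps$ by $\phi_1\veps$, so that $\veps(\uu)\subset \tilde\uu$. Writing $\hh^c_\R$ for the real span of $h_1,\ldots,h_\ell$, the image $\veps(\imath\hh^c_\R)$ is an abelian subalgebra of the compact Lie algebra $\tilde\uu$; by standard compact Lie group theory it lies in some maximal torus of $\tilde\uu$. On the other hand, writing $\tilde\hh^c_\R$ for the real span of $g_1,\ldots,g_m$, the subspace $\imath\tilde\hh^c_\R$ is itself a maximal torus of $\tilde\uu$. By conjugacy of maximal tori in $\tilde\uu$, there exists $\phi_2$ in the inner-automorphism group $\exp(\ad\tilde\uu)$ with $\phi_2\veps(\imath\hh^c_\R)\subset \imath\tilde\hh^c_\R$. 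Extending by $\C$-linearity gives $\phi_2\veps(\hh^c)\subset \tilde\hh^c$, while at the same time $\phi_2\veps(\uu)\subset \phi_2(\tilde\uu)=\tilde\uu$. Setting $\phi=\phi_2\phi_1$ and applying Lemma \ref{lem:bal} concludes the proof.

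The principal obstacle is the first step: producing $\phi_1$ requires the Karpelevich--Mostow theorem applied to the realification, together with the (standard but nontrivial) uniqueness of compact forms up to inner automorphism. The second step is much softer, being essentially the fact that an abelian subalgebra of a compact Lie algebra lies in a maximal torus and maximal tori are mutually conjugate.
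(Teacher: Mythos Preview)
Your proof is correct and follows essentially the same route as the paper's: first put $\veps(\uu)$ inside some compact form $\tilde\uu'$ of $\tilde\g^c$, conjugate $\tilde\uu'$ to the fixed $\tilde\uu$ by an inner automorphism, then use conjugacy of maximal tori in $\tilde\uu$ to push $\veps(\imath\hh^c_\R)$ into $\imath\tilde\hh^c_\R$, and finish with Lemma~\ref{lem:bal}. The only cosmetic difference is that the paper cites \cite{onishchik}, \S6, Proposition~3 directly for the existence of $\tilde\uu'\supset\veps(\uu)$, whereas you derive that fact by applying Karpelevich--Mostow to the realifications; since Onishchik's proposition is itself a consequence of Karpelevich--Mostow, this is the same argument spelled out one level deeper.
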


\begin{proof}
There is a compact form $\tilde\uu'$ of $\tilde\g^c$ such that $\veps(\uu)
\subset \tilde\uu'$ (\cite{onishchik}, \S 6, Proposition 3). 
There is an inner automorphism $\phi'$ of $\tilde\g^c$
such that $\phi'(\tilde\uu') = \tilde\uu$ (\cite{onishchik}, \S 3, Corollary to 
Proposition 6). 
Moreover, the span of the elements
$\phi'(\veps(\imath h_i))$ lies in a Cartan subalgebra of $\tilde\uu$, which
is conjugate to the span of the $\imath g_j$ by an inner automorphism of
$\tilde\uu$. This automorphism extends to an inner automorphism of 
$\tilde\g^c$. So we get an inner automorphism $\phi$ of $\tilde\g^c$ such
that $\phi(\veps(\uu)) \subset \tilde\uu$, and $\phi(\veps(\hh^c))\subset 
\tilde\hh^c$. So by Lemma \ref{lem:bal} we conclude that $\phi\veps$ is balanced.
\end{proof}

Now suppose that $\veps$ has the property that $\veps(\hh^c) \subset 
\tilde\hh^c$, but $\veps$ is not balanced. Let $\Delta=\{\alpha_1,\ldots,
\alpha_\ell\}$ be a fixed basis
of simple roots of $\Phi$. Then we set up a system of polynomial equations.
The indeterminates are $s_{\alpha,\beta}$, $t_{\alpha,\beta}$, where 
$\alpha\in\Delta$, $\beta\in A_\alpha$. For $1\leq i\leq \ell$ we set
\begin{align*}
X_i &= \sum_{\beta\in A_{\alpha_i}}  (s_{\alpha_i,\beta}+\imath t_{\alpha_i,\beta})
y_\beta\\
Y_i &= \sum_{\beta\in A_{\alpha_i}}  (s_{\alpha_i,\beta}-\imath t_{\alpha_i,\beta})
y_{-\beta}\\
\end{align*}

Next we require that the $3\ell$ elements $\veps(h_i)$, $X_i$, $Y_i$ satisfy the
relations \eqref{eqn:2} (where in place of $g_i$ we take $\veps(h_i)$,
in place of $x_i,y_i$ we take $X_i$, $Y_i$). This leads to a set of polynomial
equations in the indeterminates $s_{\alpha,\beta}$, $t_{\alpha,\beta}$, 
which we solve over $\R$. Let $\hat s_{\alpha,\beta}, \hat t_{\alpha,\beta}\in\R$
be the values that we obtain. Let $\widehat{X}_i$, $\widehat{Y}_i$ be the
same as $X_i$, $Y_i$, but with these values substituted. Then mapping 
$h_i$ to $\veps(h_i)$, $x_{\alpha_i}$ to $\widehat{X}_i$, $x_{-\alpha_i}$
to $\widehat{Y}_i$ defines an embedding $\hat \veps : \g^c \to \tilde\g^c$
(see Section \ref{subsec:pre2}). 

\begin{lem}
$\hat\veps$ is balanced.
\end{lem}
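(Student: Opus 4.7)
The plan is to show that $\hat\veps$ intertwines the compact-form conjugations, i.e.\ $\tilde\tau\circ\hat\veps = \hat\veps\circ\tau$, and then read off the balanced condition by comparing coefficients. From the defining formulas for $\uu$ and $\tilde\uu$ a routine direct computation gives $\tau(h_i) = -h_i$, $\tau(x_\alpha) = -x_{-\alpha}$, $\tilde\tau(g_k) = -g_k$, $\tilde\tau(y_\beta) = -y_{-\beta}$. Since $\hat\veps(h_i) = \veps(h_i)\in\tilde\hh^c$, weight considerations under $\ad\hat\veps(\hh^c)$ show that $\hat\veps(x_{\pm\alpha})$ has the form \eqref{eqn:3} with the same index sets $A_\alpha$ as for $\veps$ and some coefficients $a_{\alpha,\beta}, b_{\alpha,\beta}\in\C$. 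Expanding then gives
\[
\tilde\tau\hat\veps(x_\alpha) = -\sum_{\beta\in A_\alpha}\bar a_{\alpha,\beta}\, y_{-\beta},\qquad \hat\veps\tau(x_\alpha) = -\sum_{\beta\in A_\alpha} b_{\alpha,\beta}\, y_{-\beta},
\]
so the identity $\tilde\tau\hat\veps = \hat\veps\tau$ on root vectors is \emph{equivalent} to the balanced condition $b_{\alpha,\beta}=\bar a_{\alpha,\beta}$.

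Next I would verify $\tilde\tau\hat\veps = \hat\veps\tau$ on the canonical generating set $h_i, x_{\alpha_i}, x_{-\alpha_i}$, $1\leq i\leq\ell$. For $h_i$: by the $\Q$-rationality argument in the proof of Lemma \ref{lem:bal}, $\veps(h_i)$ is a rational (hence real) combination of $g_1,\ldots,g_m$, so $\tilde\tau\veps(h_i) = -\veps(h_i) = \hat\veps\tau(h_i)$. For $x_{\alpha_i}$: by the construction of $\widehat X_i$ and $\widehat Y_i$ with the real numbers $\hat s_{\alpha_i,\beta}, \hat t_{\alpha_i,\beta}$, antilinearity of $\tilde\tau$ gives $\tilde\tau(\widehat X_i) = -\widehat Y_i = \hat\veps\tau(x_{\alpha_i})$; the case $x_{-\alpha_i}$ is symmetric.

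Finally, I would promote agreement on the canonical generators to agreement on all of $\g^c$. Both $\tilde\tau\circ\hat\veps$ and $\hat\veps\circ\tau$ are \emph{antilinear} Lie algebra homomorphisms, so their coincidence set is closed under addition, Lie bracket, and multiplication by any $\lambda\in\C$ (both sides rescale by $\bar\lambda$). Hence it is a complex Lie subalgebra of $\g^c$, and since it contains a canonical generating set it equals all of $\g^c$. Evaluating the resulting identity at an arbitrary $x_\alpha$ and comparing coefficients of $y_{-\beta}$ yields $b_{\alpha,\beta}=\bar a_{\alpha,\beta}$ for all $\alpha\in\Phi$, $\beta\in A_\alpha$, so $\hat\veps$ is balanced. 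The only delicate point is this last step: one must observe that the coincidence set of two \emph{antilinear} maps is a complex subspace, not merely a real one, so that a complex Lie-algebra generating set genuinely suffices.
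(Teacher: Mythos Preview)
Your proof is correct. The paper's argument is a close cousin of yours but phrased on the ``real side'' rather than via conjugations: it observes that the elements $\imath h_i$ and $x_\alpha-x_{-\alpha}$, $\imath(x_\alpha+x_{-\alpha})$ for $\alpha\in\Delta$ generate $\uu$ as a \emph{real} Lie algebra (because their $\C$-span equals that of a canonical generating set), checks directly that $\hat\veps$ sends each of these into $\tilde\uu$, deduces $\hat\veps(\uu)\subset\tilde\uu$, and then invokes the converse direction of Lemma~\ref{lem:bal} to obtain balancedness. Your version recasts the containment $\hat\veps(\uu)\subset\tilde\uu$ as the intertwining relation $\tilde\tau\hat\veps=\hat\veps\tau$, verifies it on canonical generators, and propagates via the observation that the coincidence locus of two antilinear Lie homomorphisms is a complex Lie subalgebra. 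The two propagation steps are really the same idea in dual language (real generation of $\uu$ versus complex generation of $\g^c$); the paper's route is slightly shorter since the coefficient comparison is already packaged into Lemma~\ref{lem:bal}, while yours is more self-contained and makes the antilinearity point explicit.
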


\begin{proof}
Consider the elements $x_\alpha-x_{-\alpha}$, $\imath(x_\alpha+x_{-\alpha})$, for $\alpha\in\Delta$
and $\imath h_i$, for $1\leq i\leq \ell$. The span of these over $\C$ is the same as the span
of the canonical generating set consisting of the $x_\alpha$, $x_{-\alpha}$, $h_i$. So they
generate $\g^c$ over $\C$, and since they lie in $\uu$, they generate $\uu$ over $\R$. Moreover,
their images under $\hat\veps$ lie in $\tilde\uu$, so $\hat\veps(\uu)\subset \tilde\uu$. Since
also $\hat\veps(\hh^c) \subset \tilde\hh^c$ we conclude by Lemma \ref{lem:bal}.
\end{proof}

Since $\hat\veps$ agrees with $\veps$ on $\hh^c$, we have that $\veps$ and
$\hat\veps$ are linearly equivalent (see \cite{dyn}, Theorem 1.5, see also
\cite{graaf_sss}, Theorem 4). If the linear equivalence class of $\veps$
does not split into more than one equivalence class, then we are done:
$\veps$ and $\veps'$ are equivalent. If we are in a rare case where there 
are more equivalence classes, then we have to find more solutions to the polynomial equations:
one for each equivalence class contained in the linear equivalence class of $\veps$.

\begin{rem}
For the embeddings that have been determined with the methods of 
\cite{graaf_sss}, the following trick often works. Let $\Pi = \{\beta_1,\ldots,
\beta_m\}$ be a fixed basis of simple roots of $\Psi$. Let $\delta_1,\ldots,
\delta_m\in \C\setminus\{0\}$, and let $\phi$ be the automorphism of 
$\tilde\g^c$ mapping $g_j\mapsto g_j$, $y_{\beta_j}\mapsto \delta_j y_{\beta_j}$
$y_{-\beta_j}\mapsto \delta_j^{-1} y_{-\beta_j}$. Then the images of the
$g_j$, and $y_\beta$ under $\phi$ also form a Chevalley basis of $\tilde\g^c$.
Moreover, $\phi(y_\beta) = \delta_1^{e_1}\cdots \delta_m^{e_m} y_\beta$, if
$\beta = \sum_j e_j \beta_j$. Write $y'_\beta = \phi(y_\beta)= \delta_\beta
y_\beta$. 

Now consider the equations \eqref{eqn:3}, and write $b_{\alpha,\beta}
= \mu_{\alpha,\beta} \bar a_{\alpha,\beta}$.  If we use the basis consisting
of the $y'_\beta$, then we get that the coefficients are $a'_{\alpha,\beta}=
\delta_\beta^{-1} a_{\alpha,\beta}$ and $b'_{\alpha,\beta} = \delta_\beta 
b_{\alpha,\beta}$. So $b'_{\alpha,\beta} = \bar a'_{\alpha,\beta}$ is equivalent to
$\delta_\beta^2 = \mu_{\alpha,\beta}^{-1}$. This then yields a set of 
polynomial equations for the $\delta_i$. It is by no means guaranteed that
this set is consistent (i.e., has any solution at all). However, from our
experience, we get that in many cases the set is not only consistent, but
also a reduced Gr\"obner basis is of the form $\{ \delta_1^2-r_1,\ldots,
\delta_m^2-r_m\}$, with $r_i\in \R$, $r_i>0$, which makes solving the equations
extremely easy. 

A solution of the equations yields an automorphism $\phi$ of $\tilde\g^c$
such that $\phi(\tilde\uu)=\tilde\uu'$, where $\tilde\uu'$ is the compact form
spanned by the elements $\imath g_j$, $y'_\beta-y'_{-\beta}$, 
$\imath (y'_\beta+y'_{-\beta})$. Moreover, $\veps$ is balanced with respect to
the Chevalley basis consisting of the $y'_\beta$, so that
$\veps(\uu)\subset \tilde\uu'$. So if we set $\veps' = \phi^{-1}\veps$, then
$\veps'$ is equivalent to $\veps$ and $\veps'(\uu) \subset \tilde\uu$.
\end{rem} 

\subsection{Finding $\tilde\theta$}\label{sec:emb2}

Here we assume that we have an embedding $\veps : \g^c \hookrightarrow 
\tilde\g^c$ such 
that $\veps(\hh^c) \subset \tilde\hh^c$ and $\veps(\uu) \subset \tilde\uu$.
Now we focus on the problem of finding the involutions $\tilde\theta$ of
$\tilde\g^c$ such that $\veps\theta = \tilde\theta\veps$.

Let $\ad : \tilde\g^c\to 
\gl(\tilde\g^c)$ be the adjoint representation, i.e., $\ad x(y) = [x,y]$. 
Set
$$\A = \{ A\in \End(\tilde\g^c) \mid
A\ad(\veps\theta(y)) = \ad(\veps(y))A \text{ for all } y \in \g^c\}.$$

\begin{prop}\label{lem:theta}
Let $\tilde\theta\in \End(\tilde\g^c)$. Then $\tilde\theta$ is an involution of 
$\tilde\g^c$ with $\veps\theta = \tilde\theta\veps$ if and only if $\tilde\theta\in\A$
and
\begin{enumerate}
\item \label{theta1} $\tilde\theta^2 = I$, where $I\in \End(\tilde\g^c)$ is the identity,
\item \label{theta2} $\tilde\theta (\ad x)\tilde\theta = \ad \tilde\theta(x)$ for all 
$x\in \tilde\g^c$.
\end{enumerate}
\end{prop}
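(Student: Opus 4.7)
The plan is to treat the two implications separately, with the key observation that $\tilde\g^c$ semisimple implies $\ad:\tilde\g^c\to\gl(\tilde\g^c)$ is injective (since the centre is trivial); this is what allows one to pass from an equality of $\ad$-operators to an equality of vectors in the reverse direction.

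For the forward implication, suppose $\tilde\theta$ is an involution of $\tilde\g^c$ with $\veps\theta=\tilde\theta\veps$. Condition \eqref{theta1} is immediate. Condition \eqref{theta2} is the standard bracket-preservation identity for an automorphism: from $\tilde\theta[x,z]=[\tilde\theta x,\tilde\theta z]$ one gets $\tilde\theta\ad(x)=\ad(\tilde\theta x)\tilde\theta$, and post-multiplying by $\tilde\theta$ together with $\tilde\theta^2=I$ yields $\tilde\theta\ad(x)\tilde\theta=\ad\tilde\theta(x)$. To see $\tilde\theta\in\A$, apply \eqref{theta2} with $x=\veps\theta(y)$ and then use $\tilde\theta\veps\theta=\veps\theta^2=\veps$ (since $\theta^2=I$), obtaining
\[
\tilde\theta\ad(\veps\theta(y))=\ad(\tilde\theta\veps\theta(y))\tilde\theta=\ad(\veps(y))\tilde\theta.
\]

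For the reverse implication, assume $\tilde\theta\in\A$ together with \eqref{theta1} and \eqref{theta2}. Condition \eqref{theta1} makes $\tilde\theta$ invertible with $\tilde\theta^{-1}=\tilde\theta$, while \eqref{theta2}, read as $\tilde\theta\,[x,\tilde\theta w]=[\tilde\theta x,w]$ and then with $w=\tilde\theta z$, gives $\tilde\theta[x,z]=[\tilde\theta x,\tilde\theta z]$. Hence $\tilde\theta$ is an involutive automorphism. It remains to prove $\tilde\theta\veps=\veps\theta$. Combining the $\A$-relation $\tilde\theta\ad(\veps\theta(y))=\ad(\veps(y))\tilde\theta$ with a right multiplication by $\tilde\theta$ and using $\tilde\theta^2=I$ gives
\[
\tilde\theta\ad(\veps\theta(y))\tilde\theta=\ad(\veps(y)),
\]
while \eqref{theta2} applied with $x=\veps\theta(y)$ rewrites the left-hand side as $\ad(\tilde\theta\veps\theta(y))$. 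Therefore $\ad(\tilde\theta\veps\theta(y))=\ad(\veps(y))$, and injectivity of $\ad$ on the semisimple algebra $\tilde\g^c$ yields $\tilde\theta\veps\theta(y)=\veps(y)$ for all $y\in\g^c$. Replacing $y$ by $\theta(y)$ and using $\theta^2=I$ gives $\tilde\theta\veps=\veps\theta$, as required.

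There is no real obstacle; the only point that must be noted carefully is the use of the trivial centre of $\tilde\g^c$ to invert $\ad$, and the elementary fact that \eqref{theta2} together with invertibility of $\tilde\theta$ is equivalent to $\tilde\theta$ being a Lie algebra automorphism. Everything else is formal manipulation of the three identities.
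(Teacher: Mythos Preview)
Your proof is correct and follows essentially the same route as the paper's: both directions hinge on rewriting condition \eqref{theta2} as $\tilde\theta\ad(x)=\ad(\tilde\theta x)\tilde\theta$ and combining it with the $\A$-relation, and your argument simply makes explicit the one point the paper leaves implicit, namely that injectivity of $\ad$ (trivial centre of the semisimple $\tilde\g^c$) is what turns $\ad(\tilde\theta\veps\theta(y))=\ad(\veps(y))$ into $\tilde\theta\veps\theta(y)=\veps(y)$.
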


\begin{proof}
Suppose that $\tilde\theta$ is an involution of $\tilde\g^c$. Then \eqref{theta1} is immediate.
Also for $y\in \tilde\g^c$ we have $\tilde\theta (\ad x)\tilde\theta (y) = \tilde\theta[x,
\tilde\theta(y)] = \ad \tilde\theta(x)(y)$, so \eqref{theta2} follows. Together with
$\veps\theta = \tilde\theta\veps$ this also implies that $\tilde\theta\in\A$.

For the converse we first show that $\tilde\theta$ is an involution of $\tilde\g^c$.
From \eqref{theta1} it follows that it is bijective and that it has order 2. Using 
\eqref{theta2} we get
$\tilde\theta [x,y] = \tilde\theta \ad x (y) = \ad \tilde\theta x (\tilde\theta y) = 
[\tilde\theta(x),\tilde\theta(y)]$.
Secondly, $\tilde\theta\veps= \veps\theta$ is equivalent to $\ad \tilde\theta\veps(y)= 
\ad \veps\theta(y)$ for all $y\in \g^c$. Using \eqref{theta1} and \eqref{theta2} 
it is straightforward to see that
this is the same as $\tilde\theta\in \A$.
\end{proof}

%\begin{proof}
%Since $\tilde\theta$ is inner, the representations $\rho$ and 
%$\rho\tilde\theta$ are equivalent. So there is an $A\in \GL(V^c)$ with 
%$\rho(x) = A\rho\tilde\theta(x)A^{-1}$ for all $x\in \tilde\g^c$. Taking $x=
%\veps(y)$, for a $y\in \g^c$, and using $\veps\theta = \tilde\theta\veps$
%we get $A\in \A$. Since $\tilde\theta$ is an involution we get $A^2\rho(x) =
%\rho(x)A^2$ for all $x\in \tilde\g^c$. Since $\rho$ is irreducible, by
%Schur's lemma it now follows that $A^2 = \lambda I$. Dividing $A$ by a suitable
%scalar we retain all the properties stated before, and get $A^2= I$. 
%\end{proof}

We let $a_1,\ldots,a_n$ be a fixed basis of $\tilde\g^c$ (for example, the Chevalley basis
fixed at the start). The idea now is to translate the conditions of Proposition \ref{lem:theta}
into polynomial equations. For that we proceed as follows:

\begin{enumerate}
\item Compute a basis $A_1,\ldots,A_s$ of $\A$ (see Section \ref{subsec:pre3}; note that,
if we let $\rho, \varphi : \g^c \to \gl(\tilde\g^c)$ be the representations given by 
$\rho(y) = \ad \veps\theta(y)$, $\varphi(y) = \ad \veps(y)$, then
$\A = \End_{\rho,\varphi}(\tilde\g^c)$).
\item Let $z_1,\ldots,z_s$ be indeterminates over $\C$, and set
$A = z_1A_1+\cdots +z_sA_s$. Then $A^2=I$ is equivalent to a set
of polynomial equations in the $z_i$. Let $P_1$ denote the corresponding set of polynomials.
\item We note that Proposition \ref{lem:theta}\eqref{theta2} is equivalent to 
$A\ad a_j A = \ad Aa_j$ for $1\leq j\leq n$. Also this is equivalent to a set of polynomial
equations in the $z_i$. Let $P_2$ denote the corresponding set of polynomials.
\end{enumerate}

Now we consider the compact form $\tilde\uu$, and the corresponding conjugation
$\tilde\tau : \tilde\g^c \to \tilde\g^c$. We want to construct involutions $\tilde\theta$
of $\tilde\g^c$ that commute with $\tilde\tau$ (or, equivalently, that leave $\tilde\uu$
invariant). First we observe that it is straightforward to compute
$\tilde\tau(x)$ for an $x\in \tilde\g^c$. Indeed, let $u_1,\ldots,u_n$ be a basis of $\tilde\uu$,
and write $x = \sum_i \alpha_i u_i$, with $u_i\in \C$. Then $\tilde\tau(x) = \sum_i \bar\alpha_i
u_i$. 

Let $R = \R[x_1,\ldots,x_s, y_1,\ldots,y_s]$. We substitute $x_i +\imath y_i$
for $z_i$ in the polynomials in the sets $P_1$, $P_2$. A polynomial $f$ in one of these sets
then transforms into $g+\imath h$, with $g,h\in R$. The polynomial equation $f=0$ is equivalent
to two polynomial equations, this time over $\R$, $g=h=0$. This way we obtain a set of polynomials
$Q_1 \subset R$. 

Let $A = \sum_{i=1}^s (x_i +\imath y_i)A_i$, then $\tilde\tau A(a_j) = A\tilde\tau(a_j)$ is the same
as 
$$\sum_{i=1}^n (x_i -\imath y_i) \tilde\tau(A_ia_j) = \sum_{i=1}^n (x_i +\imath y_i)A_i
\tilde\tau(a_j).$$
Again we split the real and imaginary parts. Doing this for $1\leq j\leq n$ we
obtain a system of (linear) polynomial equations. The corresponding set of polynomials is 
denoted by $Q_2$. 

Finally we solve the system of polynomial equations $q=0$ for $q\in Q_1\cup Q_2$.
Let $\tilde\g_1,\ldots,\tilde\g_m$ be fixed noncompact real forms of $\tilde\g^c$, such that
each noncompact real form of $\tilde\g^c$ is isomorphic to exactly one of the
$\tilde\g_i$. Each solution of the polynomial equations yields an involution $\tilde\theta$
of $\tilde\g^c$, and we construct the corresponding real form $\tilde\g$ as in Proposition
\ref{prop:eps}. The using the methods of \cite{dfg} we find an isomorphism $\tilde\g \to 
\tilde\g_i$, and hence we can map $\g$ to a subalgebra of an appropriate $\tilde\g_i$. 

\begin{rem}
This method works best when the polynomial equations have a finite set of solutions: we
list them all, and obtain all $\tilde\g_i$ such that $\g$ maps to a subalgebra by an automorphism
of $\tilde\g^c$. However, it can happen that the set of solutions is infinite. Example
\ref{exa:1} describes a situation where we can deal with that.
\end{rem}

\section{Implementation and examples}\label{sec:4}

As stated in the introduction, we have implemented the algorithms described here 
in the computer algebra system {\sf GAP}4, using the package {\sf CoReLG}.
The main bottleneck of the method is the need to solve a system of polynomial
equations. One of the main parameters influencing the complexity of this system is the
dimension of the space $\A$, since the number of indeterminates is $2\dim \A$. 
(Although, of course, there are also some linear equations, effectively reducing the
number of indeterminates.) From Section \ref{subsec:pre3} we see that $\dim\A =
\sum_{i=1}^r m_i^2$, where the $m_i$ are the multiplicities of the irreducible 
$\g^c$-submodules of $\tilde\g^c$. It can happen that $\dim \A$ is so large that the 
polynomial equations become unwieldy. For example, if $\veps(\g^c)$ is the regular
subalgebra of type $A_1+A_1$ of $F_4$, then $\dim \A = 159$. On the other hand, there are
many subalgebras that lead to equations systems that we can deal with. In this section
we give some examples. An especially favourable situation arises when $\veps(\g^c)$ is an
$S$-subalgebra. That will be the subject of the next section.

In the last two examples we also report on the running times. They have been obtained
on a 3.16 GHz processor. We remark here that there are two fundamental inefficiencies
affecting these running times: firstly, we work over a field containing the square root of all
integers. This field has been implemented by ourselves in {\sf GAP} (see \cite{dg}); however,
since there is no {\sf GAP} kernel support for it, computations using this field tend to take
markedly longer that, say, over $\Q$. Secondly, we create a lot of polynomials, and also the
polynomial arithmetic in {\sf GAP} is not the most efficient possible (essentially for the
same reason as for our field). 

\begin{exa}\label{exa:1}
Let $\tilde\g^c$, $\g^c$ be the Lie algebras of type $A_3$ and $A_2$ respectively. We consider 
the simplest possible embedding: Let $\alpha_1,\alpha_2,\alpha_3$ denote the simple roots
of the root system of $\tilde\g^c$, ordered as usual; then the subalgebra generated by 
$x_{\alpha_i}$, $x_{-\alpha_i}$ for $i=1,2$ is isomorphic to $\g^c$. We consider the real form
of $\g^c$ isomorphic to $\ssl_3(\R)$ (i.e., the split form).

Since the image of $\g^c$ in $\tilde\g^c$ is regular, i.e., is generated by root vectors
of $\tilde\g^c$, it is automatic that $\veps(\uu)\subset \tilde\uu$.

In this case $\A$ has dimension 4. We get a set of 46 polynomial equations in the unknowns
$x_i$, $y_i$, $1\leq i\leq 4$. The reduced Gr\"obner basis of the ideal generated by these
polynomials is 
$$\{x_1-1, x_2-x_3, x_3^2+y_3^2-1, x_4+1, y_1, y_2+y_3, y_4\}.$$
So there is an infinite number of solutions. Now we set $z_1=1$, $z_2=x_3-\imath y_3$, 
$z_3 = x_3+\imath y_3$, $z_4=-1$ (i.e., we work symbolically with $x_3$, $y_3$) and
$A = z_1A_1+\cdots +z_4A_4$. Then the characteristic polynomial of $A$ is 
$$ T^{15} + 3T^{14} + (-3x_3^2 - 3y_3^2)T^{13}+\cdots + 
(3x_3^6 + 9x_3^4y_3^2 + 9x_3^2y_3^4 + 3y_3^6)T+
x_3^6 + 3x_3^4y_3^2 + 3x_3^2y_3^4 + y_3^6.$$
However, using $x_3^2+y_3^2=1$, this reduces to
\begin{multline*} 
T^{15} + 3T^{14} - 3T^{13} - 17T^{12} - 3T^{11} + 39T^{10} + 25T^9 - 45T^8 - 45T^7 + 25T^6 + \\
39T^5 - 3T^4 - 17T^3 - 3T^2+ 3T + 1
\end{multline*}
which is $(T-1)^6(T+1)^9$. From this we conclude that if we take any solution of the equations
and construct the corresponding real form $\tilde\g$, then its Cartan decomposition will
be $\tilde\g = \tilde\kk\oplus \tilde\pp$ with $\dim\tilde\kk = 6$ and $\dim\tilde\pp = 9$.
Now there is, up to isomorphism, only one real form of $\tilde\g^c$ with a Cartan decomposition
satisfying this, namely $\ssl_4(\R)$. Also, up to equivalence, $\tilde\g^c$ contains exactly
one subalgebra isomorphic to $\g^c$. So
we conclude that $\ssl_4(\R)$ is the only real form of
$\tilde\g^c$ containing a subalgebra isomorphic to $\ssl_3(\R)$. 
\end{exa}

\begin{exa}\label{exa:2}
Let $\tilde\g^c$, $\g^c$ be the Lie algebras of type $E_8$ and $A_1+G_2+G_2$ respectively.
As real form $\g$ we took the direct sum of the noncompact real forms of $A_1$ and $G_2$ (twice)
respectively.
In this case $\A$ was computed in 2058 seconds, and $\dim \A = 6$. The polynomial equations
were computed in 36783 seconds. The set $Q_1\cup Q_2$ contains 37460 polynomials. However,
a reduced Gr\"obner basis of the ideal generated by them is
$$ \{x_1+1,x_2,x_3-1,x_4+1,x_5-1,x_6+1,y_1,y_2,y_3,y_4,y_5,y_6\}.$$
So there is only one solution. The corresponding real form of $E_8$ turned out to be $E$VIII. 
\end{exa}

\begin{exa}\label{exa:3}
Let $\tilde\g^c$ be of type $E_6$. Then, up to equivalence, $\tilde\g^c$ contains a unique
subalgebra of type $B_4$. So let $\g^c$ be of type $B_4$ and let $\g = \so(4,5)$. 
In this example $\A$ was computed in 55 seconds, and $\dim\A = 7$. The polynomial equations were 
computed in 510 seconds, the reduced Gr\"obner basis of the ideal generated by them is 
\begin{multline*}
\{x_5^2-x_7, x_5x_6, x_6^2+y_6^2+x_7-1, x_5x_7-x_5, x_6x_7, x_7^2-x_7, x_5y_6, x_7y_6, x_1+x_5, 
x_2+x_6, \\
x_3+1, x_4+x_7, y_1, y_2-y_6, y_3, y_4, y_5, y_7\}.\end{multline*}
We see that $x_7$ can have the values 0,1. Adding $x_7$ to the generating set, the Gr\"obner
basis becomes
$$\{ x_6^2+y_6^2-1, x_1, x_2+x_6, x_3+1, x_4, x_5, x_7, y_1, y_2-y_6, y_3, y_4, y_5, y_7 \}.$$
Here the value of $x_6,y_6$ determines the solution completely. Furthermore, there is an infinite
number of possible values for those indeterminates. However, with the same method as in 
Example \ref{exa:1}, we established that all solutions lead to the inclusion $\so(4,5)\subset 
E\mathrm{I}$. 

Adding $x_7-1$ to the generating set, we get the Gr\"obner basis
$$\{ x_5^2-1, x_1+x_5, x_2, x_3+1, x_4+1, x_6, x_7-1, y_1, y_2, y_3, y_4, y_5, y_6, y_7 \}.$$
Here we get two solutions, which both yield the inclusion $\so(4,5)\subset 
E\mathrm{II}$. 
\end{exa}

\section{$S$-subalgebras of the exceptional Lie algebras}\label{sec:Ssub}

In this section we consider embeddings $\veps : \g^c\hookrightarrow \tilde\g^c$,
such that $\veps(\g^c)$ is a maximal $S$-subalgebra of $\tilde\g^c$, and the latter is of
exceptional type. 

Let $\g$ be a real form of $\g^c$. By \cite{onishchik}, \S 6, Theorem 2, if $\veps(\g^c)$ is an 
$S$-subalgebra of $\tilde\g^c$, then there are 
at most two real forms of $\tilde\g^c$ that contain $\veps(\g)$. And if $\tilde\g^c$ has
no outer automorphisms there is at most one such real form. This explains why our method works
particularly well in this case: the polynomial equations have at most two solutions.
Example \ref{exa:2} illustrates this phenomenon (there the subalgebra is a non maximal
$S$-subalgebra). 

Table \ref{tab:1} contains the results that we obtained using our programs (for the situation
described above, i.e., $\veps(\g^c)$ is a maximal $S$-subalgebra of $\tilde\g^c$). 
We describe the subalgebras
of the complex Lie algebras by giving the type of their root systems, with an upper index
denoting the Dynkin index (see \cite{dyn}).

Komrakov (\cite{komrakov}) has also published a list of the $S$-subalgebras of the real simple
Lie algebras of exceptional type. In type $E_6$ we find a few differences: the inclusions
marked by a $(*)$ are not contained in Komrakov's list. About all other inclusions Komrakov's
list and ours agree.

\begin{longtable}{|l|c|}
\caption{Maximal $S$-subalgebras of the real Lie algebras of exceptional type.}\label{tab:1}
\endfirsthead
\hline
\multicolumn{2}{|l|}{\small\slshape $S$-subalgebras} \\
\hline
\endhead
\hline
\endfoot
\endlastfoot

\hline
complex inclusion & real inclusion\\
\hline

$A_2^9 \subset E_6$ & $\left\{
\begin{aligned}
      \su(1,2) \subset E\mathrm{ II}\\
      \ssl(3, \mathbb{R}) \subset E\mathrm{ II}
 \end{aligned} \right. $ \\ 
$G_2^3 \subset E_6$ & $G \subset E\mathrm{ II}$ (*)\\
$A_2^2 \oplus G_2^1 \subset E_6$ & $\left\{
    \begin{aligned}
      \su(3) \oplus G^{cmp} \subset E\mathrm{ I} \\ 
      \su(1,2) \oplus G \subset E\mathrm{ III}\\
      \su(1,2) \oplus G^{cmp} \subset E\mathrm{ II} (*)\\
      \ssl(3,\mathbb{R}) \oplus G \subset E\mathrm{ IV}\\
      \ssl(3,\mathbb{R}) \oplus G^{cmp} \subset E\mathrm{ I} (*)
    \end{aligned} \right.$\\

$C_4^1 \subset E_6$ & $\left\{
    \begin{aligned}
      \ssp(2,2) \subset E\mathrm{ II} (*)\\
      \ssp(2,2) \subset E\mathrm{ IV} (*)\\
      \ssp(1,3) \subset E\mathrm{ III} (*)\\
      \ssp(1,3) \subset E\mathrm{ I} (*)\\
      \ssp(4,\mathbb{R}) \subset E\mathrm{ II} (*)\\
      \ssp(4,\mathbb{R}) \subset E\mathrm{ I} (*)\\
    \end{aligned}\right.$\\
 $F_4^1 \subset E_6$ & $\left\{
    \begin{aligned}
      F\mathrm{I} \subset E\mathrm{ I} (*) \\ 
      F\mathrm{II} \subset E\mathrm{ III} (*)
    \end{aligned}\right.$ \\
\hline
$A_1^{231}\subset E_7$  & $\ssl(2, \mathbb{R}) \subset E\mathrm{ V}$ \\
$A_1^{399}\subset E_7$  & $\ssl(2, \mathbb{R}) \subset E\mathrm{ V}$ \\
$A_2^{21} \subset E_7$ & $\left\{
   \begin{aligned}
    \su(1,2) \subset E\mathrm{ VI} \\
    \ssl(3, \mathbb{R}) \subset E\mathrm{ V} 
   \end{aligned}\right.$\\
 $A_1^{15} \oplus A_1^{24} \subset E_7$ & $\left\{ 
   \begin{aligned}
    \su( 2 )\oplus \ssl(2,\mathbb{R}) \subset E\mathrm{ V}\\
    \ssl(2,\mathbb{R})\oplus \su( 2 )\subset E\mathrm{ VI}\\
    \ssl(2,\mathbb{R})\oplus \ssl(2,\mathbb{R}) \subset E\mathrm{ VI}
   \end{aligned}\right.$\\
  $A_1^{7} \oplus G_2^2 \subset E_7$ & $\left\{
   \begin{aligned}
    \su( 2 )\oplus G \subset E\mathrm{ VI}\\
    \ssl(2,\mathbb{R})\oplus G^{cmp} \subset E\mathrm{ V}\\
    \ssl(2,\mathbb{R})\oplus G \subset E\mathrm{ V}
   \end{aligned} \right.$\\
 $C_3^{1} \oplus G_2^1 \subset E_7$ & $\left\{
   \begin{aligned}
    \ssp(3)\oplus G \subset E\mathrm{ VI}\\
    \ssp(1,2)\oplus G^{cmp} \subset E\mathrm{ VI}\\
    \ssp(1,2)\oplus G \subset E\mathrm{ VI}\\
    \ssp(3,\mathbb{R})\oplus G^{cmp} \subset E\mathrm{ VII}\\
    \ssp(3,\mathbb{R})\oplus G \subset E\mathrm{ V}
   \end{aligned}\right.$\\
  $A_1^{3} \oplus F_4^1 \subset E_7$ & $\left\{ 
   \begin{aligned}
    \su(2)\oplus F\mathrm{I} \subset E\mathrm{ VI}\\
    \su(2)\oplus F\mathrm{II}  \subset E\mathrm{ VI}\\
    \ssl(2,\R)\oplus F_4^{cmp} \subset E\mathrm{ VII}\\
    \ssl(2,\R)\oplus F\mathrm{I} \subset E\mathrm{ V}\\
    \ssl(2,\R)\oplus F\mathrm{II}  \subset E\mathrm{ VII}
   \end{aligned} \right.$\\
\hline
 $A_1^{520} \subset E_8$ & $\ssl(2,\R)\subset E\mathrm{ VIII}$ \\
 $A_1^{760} \subset E_8$ & $\ssl(2,\R)\subset E\mathrm{ VIII}$ \\
 $A_1^{1240} \subset E_8$ & $\ssl(2,\R)\subset E\mathrm{ VIII}$ \\
  $B_2^{120} \subset E_8$ & $\left\{
   \begin{aligned}
    \so(2,3) \subset E\mathrm{ VIII}\\
    \so(4,1) \subset E\mathrm{ VIII}
   \end{aligned} \right. $\\
  $A_1^{16} \oplus A_2^{6} \subset E_8$ & $\left\{
   \begin{aligned}
    \su(2) \oplus \su(1,2) \subset E\mathrm{ VIII}\\
    \su(2) \oplus \ssl(3,\mathbb{R}) \subset E\mathrm{IX}\\
    \ssl(2,\mathbb{R}) \oplus \su(3) \subset E\mathrm{ VIII}\\ 
    \ssl(2,\mathbb{R}) \oplus \su(1,2) \subset E\mathrm{ VIII}\\ 
    \ssl(2,\mathbb{R}) \oplus \ssl(3,\mathbb{R}) \subset E\mathrm{ VIII}
   \end{aligned} \right.$\\
 $F_4^1 \oplus G_2^1 \subset E_8$ & $\left\{
   \begin{aligned}
    F_4^{cmp} \oplus G \subset E\mathrm{ IX}\\
    F\mathrm{I} \oplus G^{cmp} \subset E\mathrm{ IX}\\ 
    F\mathrm{I} \oplus G \subset E\mathrm{ VIII}\\
    F\mathrm{II} \oplus G^{cmp} \subset E\mathrm{ VIII}\\
    F\mathrm{II} \oplus G \subset E\mathrm{ IX}
   \end{aligned}\right. $\\
\hline
  $A_1^{156} \subset F_4$ & $\ssl(2,\R) \subset F\mathrm{ I}$\\
  $A_1^{8} \oplus G_2^1 \subset F_4$ & $\left\{
   \begin{aligned}
    \su(2)\oplus G \subset F\mathrm{I}\\
    \ssl(2,\R)\oplus G^{cmp} \subset F\mathrm{ II}\\
    \ssl(2,\R)\oplus G \subset F\mathrm{ I}\\
   \end{aligned}\right . $\\
\hline
$A_1^{28} \subset G_2$ & $\ssl(2,\R) \subset G$\\
\hline  
\end{longtable}

\def\cprime{$'$} \def\cprime{$'$} \def\Dbar{\leavevmode\lower.6ex\hbox to
  0pt{\hskip-.23ex \accent"16\hss}D} \def\cprime{$'$} \def\cprime{$'$}
  \def\cprime{$'$} \def\cprime{$'$} \def\cprime{$'$}

\end{document}